\newtheorem{thm}{Theorem}[section]
\newtheorem{cor}[thm]{Corollary}
\newtheorem{prop}[thm]{Proposition}
\newtheorem{lemma}[thm]{Lemma}
\theoremstyle{plain}
\theoremstyle{definition}
\newtheorem{Emp}[thm]{}
\numberwithin{equation}{section}
\renewcommand{\1}{\mathbf{1}}
\newcommand{\R}{\mathbb{R}}
\newcommand{\Z}{\mathbb{Z}}
\newcommand{\N}{\mathbb{N}}
\newcommand{\GL}{\mathrm{GL}}
\newcommand{\SL}{\mathrm{SL}}
\newcommand{\der}{\mathrm{der}}
\newcommand{\Frob}{\mathrm{Frob}}
\newcommand{\Conv}{\mathrm{Conv}}
\newcommand{\Qlb}{\bar{\mathbb{Q}}_\ell}
\newcommand{\isom}{\overset{\thicksim}{\to}}
\newcommand{\cC}{\mathcal{C}}
\newcommand{\la}{\lambda}
\newcommand{\om}{\omega}
\newcommand{\al}{\alpha}
\newcommand{\Gm}{\Gamma}
\newcommand{\gm}{\gamma}
\newcommand{\Tr}{\operatorname{Tr}}
\newcommand{\Spec}{\operatorname{Spec}}
\newcommand{\Rep}{\operatorname{Rep}}
\newcommand{\Aut}{\operatorname{Aut}}
\newcommand{\Hom}{\operatorname{Hom}}
\newcommand{\Vect}{\operatorname{Vec_K}}
\newcommand{\wt}{\widetilde}
\begin{document}

\title[Tannakian Formalism and Langlands Conjectures]
{The Tannakian Formalism and the Langlands Conjectures}

\author{David Kazhdan}
\email{kazhdan@math.huji.ac.il}
\address{Einstein Institute of Mathematics\\
    Hebrew University \\
    Givat Ram, Jerusalem 91904\\
    Israel}

\author{Michael Larsen}
\email{mjlarsen@indiana.edu}
\address{Department of Mathematics\\
    Indiana University \\
    Bloomington, IN 47405\\
    U.S.A.}

\author{Yakov Varshavsky}
\email{vyakov@math.huji.ac.il}
\address{Einstein Institute of Mathematics\\
    Hebrew University \\
    Givat Ram, Jerusalem 91904\\
    Israel}

\thanks{DK was partially supported by ISF grant 1438/06.
ML was partially supported by NSF Grant DMS-0800705.
YV was partially supported by ISF grant 598/09.}

\begin{abstract}
Let $H$ be a connected reductive group over an algebraically closed field of characteristic zero, and let $\Gm$ be an abstract group.
In this note we show that  every homomorphism of  Grothendieck semirings $\phi:K_0^+[\Gm]\to K_0^+[H]$, which maps irreducible
representations to irreducible, comes from a group homomorphism  $\rho:\Gm\to H(K)$. We also connect this result with the Langlands conjectures.

\end{abstract}

\maketitle

\section*{Introduction}
Let $F$ be a global function field, $\Gm_F$ the absolute Galois group of $F$,  $G$ a split connected reductive group over $F$,   $l$  a prime number different from the characteristic of $F$, and  $\hat G={}^LG^0$ the connected Langlands dual group over $\Qlb$.

Recall that a weak  Langlands conjecture asserts that for every $(\pi,\om)$, where $\pi$ is
an automorphic representation $\pi$ of $G$, whose central character is of finite order, and $\om$ is a representation of $\hat G$, there exists  a unique semisimple $\ell$-adic representation $\rho_{\pi,\om}$ of $\Gm_F$, whose
$L^S$-function is equal to the $L^S$-function of $(\pi,\om)$.

 Moreover, a strong Langlands conjecture asserts that there exists a $\hat G$-valued $\ell$-adic  representation
 $\rho_{\pi}:\Gm_F\to \hat G(\Qlb)$ (not unique in general) such that the composition $\om\circ \rho_{\pi}$ is isomorphic to $\rho_{\pi,\om}$ for each representation
 $\om$.

The main result of this note implies that in some cases the strong Langlands conjecture follows from the weak one. More specifically we show the existence
 of $\rho_{\pi}$  in the case when $\rho_{\pi,\om}$ is irreducible for each irreducible representation $\om$. Moreover, in this case, $\rho_{\pi}$ is unique up to conjugation, and the Zariski closure of its image contains the derived group of $G$.

Our result is a corollary of the following variant of the Tannakian formalism. Let $H$ be a connected reductive group over an algebraically closed field $K$ of characteristic zero, and let $\Gm$ be an abstract group. Then every homomorphism  of groups $\rho:\Gm\to H(K)$ induces a homomorphism of  Grothendieck
semirings $\rho^*:K_0^+[\Gm]\to K_0^+[H]$.  In this note we show a partial converse of this assertion. Namely, we show that every homomorphism of  Grothendieck
semirings $\phi:K_0^+[\Gm]\to K_0^+[H]$, which maps irreducible representations to irreducibles, comes from a group homomorphism  $\rho:\Gm\to H(K)$.
In particular, we show that a connected reductive group is determined by its Grothendieck semiring.

This note was inspired by a combination of a work in progress \cite{KV}, where it is shown that a weak Langlands conjecture holds in many cases, and a work
\cite{LP}, which indicates that one does not need the full Tannakian structure in order to reconstruct a connected reductive group.

We would like to acknowledge the contribution of Pavel Etingof, whose suggestions led to substantial simplification and conceptual clarification of this paper.

Part of the work was done while DK and YV visited the University of Chicago.
The rest of the work was done while YV visited Indiana University.
We thank both these institutions for stimulating atmosphere and financial support.
We would also like to thank Michael Mueger for calling our attention to two previous proofs of Theorem~\ref{kgroups} below.

\section{Main results}
Let $K$ be an algebraically closed field of  characteristic zero.

\begin{Emp}
(a) For every  algebraic group $G$ over $K$, we denote by $K_0^+[G]$ the Grothendieck semiring of the category of rational representations
of $G$.

In other words, $K_0^+[G]$ is the set of equivalence classes of finite-dimensional semisimple representations of $G$. For every representation
$\om$ of $G$, we denote by $[\om]$ its class (or more precisely, the class of its semisimplification) in $K_0^+[G]$.  For every pair of semisimple representations $\om_1$ and $\om_2$ of $G$, we have
$[\om_1]+[\om_2]=[\om_1\oplus\om_2]$ and $[\om_1]\cdot[\om_2]=[\om_1\otimes\om_2]$.

(b) Note that a representation $\om$ of $G$ is irreducible, if and only if its class $[\om]\in K_0^+[G]$ is irreducible, that is, it cannot be realized as a nontrivial sum
$[\om_1]+[\om_2]$ of elements of  $K_0^+[G]$.

(c) Every homomorphism $\rho:G\to H$ of algebraic group over $K$ gives rise to the  homomorphism
$\rho^*:K_0^+[H]\to K_0^+[G]$ of semirings, where $\rho^*([\om]):=[\om\circ\rho]$.

\end{Emp}
The following result asserts that each connected reductive group is determined by its Grothendieck semiring.
\begin{thm}
\label{kgroups}
Let $G$ and $H$ be two connected reductive groups over $K$, and let $\phi:K_0^+[G]\isom K_0^+[H]$ be an isomorphism of semirings.

Then there exists an isomorphism $\rho\colon H \isom G$ such that $\rho^*=\phi$. Moreover, $\rho$ is unique up to conjugation.
\end{thm}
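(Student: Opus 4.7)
My plan splits into uniqueness and existence.

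For uniqueness, suppose two isomorphisms $\rho_1, \rho_2 \colon H \isom G$ both induce $\phi$. Then $\sigma := \rho_1 \rho_2^{-1} \in \Aut(G)$ preserves the isomorphism class of every representation of $G$. I would then invoke the classical fact that, for a connected reductive group over an algebraically closed field of characteristic zero, every automorphism fixing every irreducible representation up to isomorphism is inner: outer automorphisms act via Dynkin diagram symmetries, and a nontrivial such symmetry permutes the fundamental representations nontrivially.

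For existence, the plan is to extract the root datum of $G$ from $K_0^+[G]$ and invoke the classification of connected reductive groups. The first step is to identify the one-dimensional representations as the multiplicative units of the semiring $K_0^+[G]$; these form a group canonically isomorphic to the character lattice $X^*(G/G_{\der})$. Next, using Schur's lemma, for each irreducible $[V]$ the dual $[V^*]$ is characterized intrinsically as the unique irreducible whose product with $[V]$ contains the trivial representation, so duality is preserved by $\phi$. The main step is to pass to the integral Grothendieck ring $R(G) := K_0^+[G] \otimes_{\Z_{\geq 0}} \Z$, which via the character map is isomorphic to $\Z[X^*(T)]^W$, and to recover the triple $(X^*(T), W, \Lambda^+)$ from its structure as a based ring (with distinguished $\Z_{\geq 0}$-basis the classes of irreducibles). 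Since this triple determines the root datum---the simple reflections in $W$ give simple coroots, and the walls of the dominant cone $\Lambda^+$ give simple roots---the resulting isomorphism of root data furnishes an isomorphism $\rho \colon H \isom G$; the uniqueness argument then lets me adjust $\rho$ by an inner automorphism so that $\rho^* = \phi$.

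The main obstacle is the intrinsic recovery of $(X^*(T), W, \Lambda^+)$ from the based ring $R(G)$. Possible routes include analyzing $\Spec R(G) = T/W$ together with its Weyl-group-orbit structure, exploiting the Weyl-basis decomposition of products $[V_\lambda][V_\mu]$ to detect dominance order and thereby single out the fundamental weights, or---which appears to be the paper's novelty---applying the main theorem about abstract groups $\Gm$ cleverly to bypass direct combinatorial reconstruction altogether.
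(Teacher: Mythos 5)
Your overall strategy (reconstruct the root datum from $K_0^+[G]$ and invoke the classification of connected reductive groups) is the same as the paper's, and your preliminary observations are sound: one-dimensional representations are indeed the units of the semiring, duals are characterized via the trivial subrepresentation of $[V][V^*]$, and the uniqueness argument (an automorphism of $G$ fixing every irreducible up to isomorphism fixes the monoid of dominant weights, hence the whole root datum, hence is inner) is essentially the paper's. But you have explicitly flagged the crux of the matter---intrinsically recovering $(X^*(T), W, \Lambda^+)$ from the based ring---as an unresolved ``main obstacle'' and only listed candidate routes, which leaves a genuine gap precisely where the real work lies.

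Of your three suggested routes, the third is circular: the paper's theorem about abstract groups $\Gamma$ (Theorem~\ref{main}) is \emph{deduced from} Theorem~\ref{kgroups}, not the other way around, so it cannot be used to prove it. The first route (analyzing $\Spec R(G) = T/W$) also fails as stated: the paper's own remark notes that for $G$ simply connected and semisimple, $K_0[G]\cong\Z[x_1,\dots,x_r]$, so $\Spec R(G)$ is just affine space and retains only the rank; the $W$-action and $T$ itself are not visible from the ring alone. The second route is in essence what the paper does, but it is far from immediate. The paper's key technical input is Proposition~\ref{order}: the dominance order $\mu\le\lambda$ on dominant weights is equivalent to the existence of a fixed $V'$ such that every irreducible factor of $V_\mu^{\otimes n}$ occurs in $V_\lambda^{\otimes n}\otimes V'$ for all $n$---a purely semiring-theoretic condition. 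Proving $(b)\Rightarrow(c)$ there is the hard direction and invokes the PRV conjecture (Kumar's theorem). From the recovered partial order, the paper then reconstructs, in several nontrivial steps, the addition on dominant weights, the simple roots (via minimal $\alpha$ such that $V_{2\lambda-\alpha}\subset V_\lambda^{\otimes 2}$ for some $\lambda$, again using Kumar), the simple coroots, and finally $W$ and the full root datum. Your sketch gestures at the right target but does not supply any of this, so as written the existence half of the proof is incomplete.
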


\begin{Emp}
{\bf Remark.} Note, by comparison, that if $G$ is connected, semisimple, and simply connected, then the Grothendieck \emph{ring} $K_0[G]$ is isomorphic to
$\Z[x_1,\ldots,x_r]$, where $r$ is the rank of $G$.  Thus, for such groups, $K_0[G]$ encodes only the rank.
\end{Emp}

\begin{Emp}
Now let  $\Gm$ be an abstract group, and let $K_0^+[\Gm]$ be the Grothendieck semiring of the category of finite-dimensional representations
of $\Gm$ over $K$. Every group homomorphism $\rho:\Gm\to G(K)$ gives rise to the homomorphism
 $\rho^*:K_0^+[G]\to K_0^+[\Gm]$ of Grothendieck semirings.
 \end{Emp}

We have  the following version of the Tannakian formalism.

\begin{thm}
\label{main}
Let $\phi:K_0^+[G]\to K_0^+[\Gm]$ be a homomorphism of semirings which maps irreducible elements to irreducibles.

Then  there exists a  homomorphism
$\rho\colon \Gamma \to G(K)$ such that  $\rho^*=\phi$.
Moreover,  the Zariski closure of the image of each such $\rho$ contains $G^{\der}$, and $\rho$ is unique up to conjugation.
\end{thm}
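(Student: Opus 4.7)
The plan is to reduce Theorem~\ref{main} to Theorem~\ref{kgroups} by extracting from $\phi$ a Tannakian subcategory of $\Rep(\Gm)$ equivalent to $\Rep(G)$. Fix a faithful $G$-representation $V_0$; for each irreducible $V_\la$ of $G$, choose an irreducible $\Gm$-representative $\Phi_\la$ of the class $\phi([V_\la]) \in K_0^+[\Gm]$, which is possible since $\phi$ maps irreducibles to irreducibles. These $\Phi_\la$ are pairwise non-isomorphic: the Schur-type criterion $[V_\la]\cdot[V_\mu^*]\supset[1] \Leftrightarrow \la=\mu$ is intrinsic to the semiring, and $\phi$ commutes with duality on irreducibles ($\phi([V^*])=\phi([V])^*$, being the unique irreducible of $K_0^+[\Gm]$ whose product with $\phi([V])$ contains $[1]$), so the criterion transfers to give $[\Phi_\la]\cdot[\Phi_\mu^*]\supset[1]\Leftrightarrow\la=\mu$. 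Let $\mathcal{C}\subset\Rep(\Gm)$ be the full rigid semisimple tensor subcategory with simple objects the $\Phi_\la$; its closure under $\otimes$, duals, and direct sums is controlled by the semiring structure of $K_0^+[G]$ via $\phi$.

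By Tannakian duality applied with the forgetful fiber functor, $\mathcal{C}\cong\Rep(\tilde G)$ for a reductive algebraic group $\tilde G$ over $K$ (algebraic because $\mathcal{C}$ is finitely tensor-generated by $\Phi_{V_0}$ and its dual). The tensor inclusion $\mathcal{C}\hookrightarrow\Rep(\Gm)$ yields, via $\tilde G(K)=\Aut^\otimes(\omega|_\mathcal{C})$, a natural homomorphism $\tilde\rho:\Gm\to\tilde G(K)$. On semirings, $\phi$ factors as $K_0^+[G]\xrightarrow{\phi'}K_0^+[\tilde G]\xrightarrow{\tilde\rho^*}K_0^+[\Gm]$, with $\phi'$ a bijection on irreducibles and hence a semiring isomorphism. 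The main obstacle is then to verify that $\tilde G$ is connected so that Theorem~\ref{kgroups} applies. I expect this to follow from comparing the invertible elements of the two semirings: the $1$-dimensional classes in $K_0^+[G]$ form the torsion-free group $X^*(G/G^\der)$ (as $G/G^\der$ is a torus, $G$ being connected reductive), while any nontrivial $\tilde G/\tilde G^0$ with nontrivial abelianization would contribute torsion to the analogous group for $\tilde G$, contradicting $\phi'$. Granted connectedness, Theorem~\ref{kgroups} yields $\rho_0:\tilde G\isom G$ realizing $\phi'$, and $\rho:=\rho_0\circ\tilde\rho:\Gm\to G(K)$ satisfies $\rho^*=\phi$.

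For the remaining assertions: given any $\rho'$ with $(\rho')^*=\phi$, the Zariski closure $H'\subset G$ of $\rho'(\Gm)$ is reductive, and every $G$-irreducible restricts to an $H'$-irreducible (since $\phi$ preserves irreducibility and $\Gm$ is Zariski dense in $H'$); a standard structural fact about reductive subgroups of a connected reductive group then forces $H'\supset G^\der$. For uniqueness up to conjugation: two lifts $\rho_1,\rho_2$ of $\phi$ yield tensor functors $\Rep(G)\to\Rep(\Gm)$ agreeing on isomorphism classes of objects, so at each object the $\Gm$-reps $\rho_1^*V$ and $\rho_2^*V$ are non-canonically isomorphic; constructing a monoidal natural isomorphism between them (using rigidity of $\Rep(G)$ together with the already-established existence of $\rho$) produces, via Tannakian reconstruction, an element of $G(K)$ conjugating $\rho_1$ to $\rho_2$.
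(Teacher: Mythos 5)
Your broad strategy — reconstruct a Tannakian subcategory $\cC\subset\Rep(\Gm)$ from $\phi$, recognize it as $\Rep(\wt G)$ for an algebraic group $\wt G$, and then invoke Theorem~\ref{kgroups} to identify $\wt G$ with $G$ — is in fact the paper's strategy, and the factorization of $\phi$ through $K_0^+[\wt G]$ is exactly the right move. However, there are substantial gaps.

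\textbf{The main gap: the $\Phi_\la$ need not be pairwise distinct.} You claim that the Schur-type criterion $[V_\la]\cdot[V_\mu^*]\supset[\1]\Leftrightarrow\la=\mu$ transfers through $\phi$. The direction $\la=\mu\Rightarrow[\Phi_\la]\cdot[\Phi_\mu^*]\supset[\1]$ is fine, but the converse fails when $\phi$ is not injective: if $\la\neq\mu$ then $[V_\la]\cdot[V_\mu^*]=\sum_\nu m_\nu[V_\nu]$ with no trivial summand, but some $\phi([V_\nu])$ with $\nu\neq 0$ could still equal $[\1]$, producing a trivial summand in $[\Phi_\la]\cdot[\Phi_\mu^*]$. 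A concrete instance: take $G=\SL_2\times\mathbb{G}_m$, $\Gm$ a group with a Zariski-dense map $\rho_0$ to $\SL_2(K)$, and $\phi=(\rho_0,1)^*$. Then $\phi$ sends irreducibles to irreducibles but kills all the $\mathbb{G}_m$-characters, so infinitely many distinct $\la$ give the same $\Phi_\la$. With repetitions among the $\Phi_\la$, your $\phi'$ is not injective (only surjective), the map to $K_0^+[\wt G]$ is not a semiring isomorphism, and Theorem~\ref{kgroups} is not applicable. The paper solves precisely this by Lemma~\ref{density}: it replaces $\phi$ by $\wt\phi:K_0^+[G]\to K_0^+[\Gm\times Z(K)]$, where $Z$ is the center, sending $[\om]\mapsto[\rho_\om\boxtimes z_\om]$; recording the central character forces injectivity and reduces the general case to the injective one. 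Your proposal contains no mechanism to handle non-injective $\phi$, and this is the crux of the existence proof.

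\textbf{Connectedness of $\wt G$.} Your argument — that a disconnected $\wt G$ would introduce torsion in the group of one-dimensional classes — does not work: a disconnected group with perfect (or trivial) component group has no abelian torsion to detect, yet is still disconnected. The paper uses a genuine Tannaka-theoretic criterion for connectedness in terms of tensor-generated subcategories (\cite[Cor 2.22]{DM}), which is a semiring-visible invariant; your torsion test is not sufficient.

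\textbf{Uniqueness.} You assert that two homomorphisms $\rho_1,\rho_2$ with $\rho_1^*=\rho_2^*$ on $K_0^+$ give rise to a monoidal natural isomorphism between the tensor functors $\rho_1^*,\rho_2^*:\Rep G\to\Rep\Gm$ ``using rigidity.'' But producing a \emph{monoidal} natural isomorphism from mere object-wise isomorphisms is precisely the hard content of uniqueness; in general two tensor functors agreeing on iso-classes of objects are not tensor-isomorphic. The paper avoids this by again passing to the injective/$\wt\phi$ situation, factoring through $\pi:\Gm\to H(K)$, and deducing uniqueness from the uniqueness clause of Theorem~\ref{kgroups} together with the rigidity of the Tannakian factorization — not from an ad hoc construction of a monoidal isomorphism.

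\textbf{Zariski density.} The ``standard structural fact'' you invoke (reductive $H'\subset G$ with all $G$-irreducibles restricting irreducibly to $H'$ implies $H'\supset G^{\der}$) is not standard and requires proof; note also that your statement drops the injectivity of the restriction on $K_0^+$, which the paper's argument (via Chevalley's theorem or \cite[Prop 2.21]{DM}) uses in an essential way. Again the paper reduces to the injective setting via $\wt\rho:\Gm\times Z(K)\to G(K)$ and shows its image is dense, which is cleaner and actually provable.

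In short: the skeleton of your reduction is right, but the proposal is missing the paper's key device — tensoring with the center to force injectivity of the semiring map — and the arguments offered for connectedness, uniqueness, and density each have a genuine gap.
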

\begin{Emp}
\label{remmain}
{\bf Remarks.}

(a) Conversely, let  $\rho\colon \Gamma \to G(K)$ be a homomorphism such that  the Zariski closure of $\rho(\Gamma)$ contains $G^{\der}$.
Then the homomorphism $\rho^*:K_0^+[G]\to K_0^+[\Gm]$ maps irreducible elements to irreducibles.

(b) The result fails completely if one does not assume that $\phi$ maps irreducible elements to irreducible.

Indeed, let $G$ be $\SL_2$, and let $\Gm$ be the group with one element. In this case,  for each integer
$k\geq 2$ there exists a (unique) homomorphism of semirings
$$\phi_k: K_0^+[\SL_2]\to K_0^+[\Gm]=\Z_{\geq 0},$$
which maps the standard representation of
$\SL_2$ to $k\in\Z_{\geq 0}$.  Only $\phi_2$ corresponds to a (unique) homomorphism $\Gm\to \SL_2(K)$.
 \end{Emp}


\begin{Emp}
\label{cg}
{\bf Chevalley space.}
(a) Let $c_G:=\Spec K[G]^G$ be the  Chevalley space of $G$. For every representation $\om$ of $G$, its trace $\Tr_{\om}\in K[G]^G\subset K[G]$ is a
regular function on $c_{G}$.

(b) Let $\chi_G\colon G\to c_G$ be the canonical projection,
induced by the embedding $K[c_G]=K[G]^G\hookrightarrow K[G]$. Then  for each $g\in G$
and each representation $\om$ of $G$, we have an equality $\Tr_{\om}(\chi_G(g))=\Tr_{\om}(g)$.
\end{Emp}

The following result is a more explicit formulation of Theorem~\ref{main}.

\begin{cor}
\label{cmain}
Let $f:\Gm\to c_G(K)$ be a map of sets.

Suppose that  for every irreducible algebraic representation  $\om$ of $G$, there exists an
irreducible finite dimensional representation $\rho_\om$ of $\Gm$ over $K$ such that
\begin{equation}
\label{trace-condition}
\Tr_{\rho_\om}(\gm) = \Tr_\om(f(\gm))  \text{ for all }\gm\in\Gm.
\end{equation}
Then there exists a  homomorphism $\rho\colon \Gamma \to G(K)$ such that
\begin{equation}
\label{lift}
\chi_G(\rho(\gm)) = f(\gm ) \text{ for all }\gm\in\Gm.
\end{equation}
Moreover, the Zariski closure of $\rho(\Gamma)$ contains $G^{\der}$, and $\rho$ is unique up to conjugation.
\end{cor}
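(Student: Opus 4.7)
The plan is to reduce Corollary~\ref{cmain} to Theorem~\ref{main} by building, from the data $(f,\{\rho_\om\})$, a semiring homomorphism $\phi\colon K_0^+[G]\to K_0^+[\Gm]$ that sends irreducibles to irreducibles, and then translating $\rho^*=\phi$ back to the level of Chevalley spaces.

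Since every finite-dimensional representation of $G$ decomposes uniquely into irreducibles, $K_0^+[G]$ is the free commutative monoid on the isomorphism classes of irreducible representations, so there is a unique additive extension $\phi$ of the rule $[\om]\mapsto[\rho_\om]$; this map automatically sends irreducibles to irreducibles. The substantive step, where the hypothesis (\ref{trace-condition}) is actually used, is to check multiplicativity. For irreducibles $\om_1,\om_2$ of $G$, decompose $\om_1\otimes\om_2\cong\bigoplus_i\om_i^{\oplus n_i}$ and compute, for each $\gm\in\Gm$,
\[
\Tr_{\rho_{\om_1}\otimes\rho_{\om_2}}(\gm)=\Tr_{\om_1}(f(\gm))\,\Tr_{\om_2}(f(\gm))=\Tr_{\om_1\otimes\om_2}(f(\gm))=\sum_i n_i\,\Tr_{\rho_{\om_i}}(\gm),
\]
applying (\ref{trace-condition}) at the first and last steps. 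Since characters over a field of characteristic zero determine the semisimplification of a finite-dimensional representation, this identifies $[\rho_{\om_1}]\cdot[\rho_{\om_2}]$ with $\sum_i n_i[\rho_{\om_i}]=\phi([\om_1]\cdot[\om_2])$ in $K_0^+[\Gm]$, so $\phi$ is a homomorphism of semirings.

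Applying Theorem~\ref{main} to $\phi$ then yields a homomorphism $\rho\colon\Gm\to G(K)$, unique up to conjugation and with Zariski closure of image containing $G^{\der}$, such that $\rho^*=\phi$. For each irreducible $\om$ and each $\gm\in\Gm$,
\[
\Tr_\om(\rho(\gm))=\Tr_{\om\circ\rho}(\gm)=\Tr_{\rho_\om}(\gm)=\Tr_\om(f(\gm)),
\]
so $\chi_G(\rho(\gm))$ and $f(\gm)$ agree when tested against every trace function $\Tr_\om\in K[c_G]$. Since such traces span $K[G]^G=K[c_G]$ as a $K$-algebra and regular functions separate $K$-points of the affine variety $c_G$, this forces (\ref{lift}); the uniqueness of $\rho$ and the density of its image then descend directly from Theorem~\ref{main}. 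I expect no serious obstacle beyond the multiplicativity check in Step~2, whose role is precisely to promote the naïve additive extension of $[\om]\mapsto[\rho_\om]$ into an honest semiring homomorphism so that Theorem~\ref{main} can be invoked.
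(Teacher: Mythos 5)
Your proposal follows essentially the same route as the paper's: the paper isolates your two translation steps as Lemma~3.1 (existence of the semiring homomorphism $\phi$ via the trace identity, and the equivalence between condition~(\ref{lift}) for $\rho$ and the isomorphisms $\om\circ\rho\cong\rho_\om$), then invokes Theorem~\ref{main}. One small point worth making explicit: to conclude uniqueness from Theorem~\ref{main}, you need the \emph{converse} of your last displayed chain, namely that any $\rho$ satisfying~(\ref{lift}) already has $\rho^*=\phi$ (run the same trace identities the other way, using that characters in characteristic zero determine the semisimplification); you assert that uniqueness ``descends directly'' but only prove the forward implication, so a reader would want that sentence added.
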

\begin{Emp}
{\bf Remark.}
Conversely, assume that there exists a homomorphism   $\rho\colon \Gamma \to G(K)$ satisfying (\ref{lift}) and such that
 the Zariski closure of $\rho(\Gamma)$ contains $G^{\der}$. Then for every irreducible representation $\om\colon G\to \GL_{n}$
 the representation
 $$\rho_{\om}:=\om\circ\rho:\Gm\to \GL_{n}(K)$$
 is irreducible  and satisfies  (\ref{trace-condition})
 (use \ref{cg} (b)).
 \end{Emp}

\begin{Emp}
{\bf Application to the Langlands conjectures.}

Let $F$ be a global field, $F^{\mathrm{sep}}$ a separable closure, $\Gamma_F = \mathrm{Gal}(F^{\mathrm{sep}}/F)$ the absolute Galois group, and
$\ell$ a prime number, different from the characteristic of $F$. Let $\hat G$ be a connected reductive group over $\Qlb$.

By an {\em $\ell$-adic} (resp. {\em $\hat G$-valued $\ell$-adic}) representation of $\Gm_F$, we mean a continuous homomorphism $\rho\colon \Gamma_F\to \GL_n(\Qlb)$ (resp. $\rho\colon \Gamma_F\to \hat G(\Qlb)$) which is unramified for almost all places of $F$.

There is a well-defined trace
$\Tr_{\rho}(\Frob_v)$ (resp.  $\Tr_{\om\circ\rho}(\Frob_v)$) for almost all places  $v$ of $F$ (resp. and all representations $\om$ of $\hat G$).
\end{Emp}

The following analogue of Corollary~\ref{cmain} has applications to Langlands conjectures.

\begin{cor}
\label{galrep}
Let $\hat G$ be a reductive group over $\Qlb$,  $\Sigma$ a cofinite subset of the places of $F$,  and $f\colon \Sigma\to c_{\hat G}(\Qlb)$  any map of sets.

Assume that for every irreducible algebraic representation $\om$ of $G$, there exists an irreducible
$\ell$-adic  representation $\rho_\om$ of $\Gamma_F$ such that
\begin{equation}
\label{frob-condition}
\Tr_{\rho_\om}(\Frob_v) = \Tr_{\om}(f(v))\text { for almost all } v\in \Sigma.
\end{equation}
Then there exists a $\hat G$-valued $\ell$-adic representation $\rho\colon \Gamma_K\to \hat G(\Qlb)$ such that
\begin{equation}
\label{lift1}
\chi_{\hat G}(\rho(\Frob_v)) = f(v)\text{  for almost all }v\in \Sigma.
\end{equation}
Moreover, the Zariski closure of $\rho(\Gamma_F)$ contains $\hat G^{\der}$,  and $\rho$ is unique up to conjugation.
\end{cor}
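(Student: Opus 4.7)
The plan is to apply Theorem~\ref{main} with $\Gm=\Gm_F$ and $G=\hat{G}$, and then to upgrade the resulting abstract group homomorphism to an $\ell$-adic representation satisfying (\ref{lift1}). For each irreducible algebraic representation $\om$ of $\hat{G}$, fix an irreducible $\ell$-adic $\rho_\om$ satisfying (\ref{frob-condition}), and define $\phi\colon K_0^+[\hat{G}]\to K_0^+[\Gm_F]$ by $\phi\bigl(\sum_i m_i[\om_i]\bigr) = \sum_i m_i [\rho_{\om_i}]$ on decompositions into irreducibles. Additivity and preservation of irreducibility are clear from the construction.

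The main obstacle is multiplicativity. For irreducibles $\om_1, \om_2$ with $\om_1\otimes\om_2 \cong \bigoplus_i \om_i^{m_i}$, condition (\ref{frob-condition}) yields
\[\Tr_{\rho_{\om_1}\otimes\rho_{\om_2}}(\Frob_v) = \Tr_{\om_1\otimes\om_2}(f(v)) = \sum_i m_i\,\Tr_{\rho_{\om_i}}(\Frob_v)\]
for almost every $v\in\Sigma$. Chebotarev density, together with the standard fact that a continuous semisimple $\ell$-adic representation of $\Gm_F$ unramified almost everywhere is determined up to isomorphism by its traces at almost all Frobenii, shows that the semisimplification of $\rho_{\om_1}\otimes\rho_{\om_2}$ is isomorphic to $\bigoplus_i\rho_{\om_i}^{m_i}$; hence $[\rho_{\om_1}]\cdot[\rho_{\om_2}]=\sum_i m_i[\rho_{\om_i}]$ in $K_0^+[\Gm_F]$, and $\phi$ is a well-defined semiring homomorphism mapping irreducibles to irreducibles.

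Theorem~\ref{main} now yields an abstract homomorphism $\rho\colon\Gm_F\to\hat{G}(\Qlb)$, unique up to conjugation, with $\rho^*=\phi$ and Zariski closure $H'$ of $\rho(\Gm_F)$ containing $\hat{G}^{\der}$. The quotient $H'/\hat{G}^{\der}$ is a closed algebraic subgroup of the torus $\hat{G}^{\ab}$, hence reductive; the unipotent radical of $H'$ maps trivially to this quotient and therefore lies in the semisimple group $\hat{G}^{\der}$, forcing it to be trivial. Thus $H'$ is reductive, so for any algebraic representation $\om$ of $\hat{G}$, the Zariski density of $\rho(\Gm_F)$ in $H'$ implies that $\om\circ\rho$ is a semisimple $\Gm_F$-representation whose class equals $\phi([\om])$. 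Fixing a faithful $\om_0\colon\hat{G}\hookrightarrow\GL_N$, the representation $\om_0\circ\rho$ is therefore isomorphic to the continuous, almost-everywhere-unramified $\ell$-adic representation representing $\phi([\om_0])$ (a direct sum of the $\rho_{\om_i}$'s); conjugating by an intertwiner in $\GL_N(\Qlb)$, we see $\om_0\circ\rho$ is itself continuous and unramified outside a finite set, and since $\om_0$ is a closed embedding, the same holds for $\rho$.

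Finally, for every irreducible $\om$, the isomorphism $\om\circ\rho\cong\rho_\om$ of semisimple $\Gm_F$-representations gives $\Tr_\om(\rho(\Frob_v)) = \Tr_{\rho_\om}(\Frob_v) = \Tr_\om(f(v))$ for almost every $v\in\Sigma$. Since the characters of irreducible representations of $\hat{G}$ span $\Qlb[\hat{G}]^{\hat{G}}$ and thus separate points of $c_{\hat{G}}(\Qlb)$, the identity in \ref{cg}(b) yields $\chi_{\hat{G}}(\rho(\Frob_v)) = f(v)$. Zariski density of the image and uniqueness up to conjugation are inherited directly from Theorem~\ref{main}.
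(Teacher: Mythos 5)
Your argument follows essentially the same route as the paper: you construct $\phi$ on irreducible classes, verify multiplicativity via Chebotarev density and the fact that an almost-everywhere-unramified semisimple $\ell$-adic representation is determined by its Frobenius traces, invoke Theorem~\ref{main}, and then recover continuity and almost-everywhere-unramifiedness of $\rho$ through a faithful representation. The detour through the reductivity of the Zariski closure $H'$ is correct but superfluous: since $\rho^*=\phi$, the class $[\om\circ\rho]=\phi([\om])=[\rho_\om]$ is already irreducible for irreducible $\om$, so $\om\circ\rho$ (whose semisimplification has length one) is literally isomorphic to $\rho_\om$, and semisimplicity of $\om\circ\rho$ for arbitrary $\om$ follows by additivity; no structure theory of $H'$ is needed. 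The one real gap is in the last sentence: uniqueness up to conjugation is \emph{not} directly inherited from Theorem~\ref{main}. That theorem gives uniqueness among $\rho'$ with $\rho'^*=\phi$, whereas the Corollary asserts uniqueness among $\hat G$-valued $\ell$-adic representations satisfying~(\ref{lift1}). You still need to show that any $\rho'$ satisfying~(\ref{lift1}) has $\rho'^*=\phi$; this again requires Chebotarev density together with the fact that the $\Tr_\om$ for irreducible $\om$ span $\Qlb[c_{\hat G}]$, exactly as in Lemma~\ref{equiv}(b). Adding that converse step closes the argument.
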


\section{Determining connected reductive group by its Grothendieck semiring}
In this section, we are going to prove Theorem~\ref{kgroups}.
Michael Mueger called our attention to the fact that at least two proofs of this theorem already exist in the literature: \cite{Mc} and \cite{Ha}.  Nevertheless, we feel that this new proof has 
merits (including brevity) which justify presenting it.

Let $G$ be a connected reductive group. We will fix a Borel subgroup $B\subset G$ and a maximal torus $T\subset B$.
Let $\al_1,\ldots,\al_r$ be the simple roots of $G$ with respect to $(B,T)$, and let $W$ be the Weyl group of $(G,T)$.

\begin{Emp}
\label{notation-weights}

(a) For each subset $X\subset X^*(T)$ we denote by $\Conv(X)$ the convex hull of
$X\subset X^*(T)\otimes\R$.

(b) For each dominant weight $\nu$ of $G$ we denote by $V_{\nu}$ the irreducible representation of $G$ with highest weight $\nu$.

(c) We define a partial order on $X^*(T)$ by the rule $\mu\leq\la$ if and only if $\la=\mu+\sum_{i=1}^r x_i\al_i$ and $x_i\geq 0$ for all $i$.

\end{Emp}

\begin{prop}
\label{order}
Let $\mu$ and $\la$ be two dominant weights of $G$. The following conditions are equivalent:

(a) $\mu\leq \la$;

(b) $\Conv(W\mu)\subset \Conv(W\lambda)$;

 (c) There exists a finite dimensional representation $V'$ of $G$ such that
for every $n$, every irreducible factor of $V_\mu^{\otimes n}$ is a factor of
$V_\lambda^{\otimes n}\otimes V'$.
\end{prop}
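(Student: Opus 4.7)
The plan is to prove $(a)\Leftrightarrow(b)$ by classical root-system theory, $(c)\Rightarrow(b)$ by an asymptotic weight argument, and $(a)\Rightarrow(c)$ by explicit construction of $V'$; these three suffice to establish the equivalence. For $(a)\Leftrightarrow(b)$, I appeal to the standard lemma that for dominant $\mu,\la$ one has $\mu\in\Conv(W\la)$ iff $\mu\leq\la$ (see e.g.\ Humphreys, \emph{Introduction to Lie Algebras}, \S13.4). Combined with $W$-invariance of $\Conv(W\la)$, this gives $\mu\leq\la\Leftrightarrow W\mu\subset\Conv(W\la)\Leftrightarrow\Conv(W\mu)\subset\Conv(W\la)$.

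For $(c)\Rightarrow(b)$, I apply (c) to the Cartan component $V_{n\mu}\subset V_\mu^{\otimes n}$. Then $V_{n\mu}\subset V_\la^{\otimes n}\otimes V'$, so $n\mu=\eta_n+\xi_n$ with $\eta_n$ a weight of $V_\la^{\otimes n}$ (hence $\eta_n\in n\,\Conv(W\la)$) and $\xi_n$ a weight of $V'$, drawn from a fixed finite set. Dividing by $n$ and letting $n\to\infty$ yields $\mu\in\Conv(W\la)$, and by $W$-invariance, $\Conv(W\mu)\subset\Conv(W\la)$.

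The main content is $(a)\Rightarrow(c)$. I aim to take $V'=\bigoplus_{\eta\in F}V_\eta$ for a finite set $F$ of dominant weights selected to absorb, uniformly in $n$, two sources of discrepancy. First, the congruence-class shift between $n\mu$ and $n\la$ modulo the root lattice $Q:=\Span_\Z(\al_1,\dots,\al_r)$: since $X^*(T)/Q$ is finite, only finitely many such shift classes arise as $n$ varies. Second, the boundary behavior of the polytope $n\,\Conv(W\la)$: the set of highest weights of irreducible factors of $V_\la^{\otimes n}$ lies inside $n\,\Conv(W\la)\cap X^*(T)_{\geq 0}\cap(n\la+Q)$ and asymptotically fills it, but may have gaps near the boundary. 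Each highest weight $\nu$ of a factor of $V_\mu^{\otimes n}$ lies in $n\,\Conv(W\mu)\subset n\,\Conv(W\la)$; once a dominant $\sigma$ with $V_\sigma\subset V_\la^{\otimes n}$ and $\eta\in F$ are chosen so that $\nu=\ov{\sigma+w\eta}$ for some $w\in W$, the PRV theorem (Kumar, Mathieu) gives $V_\nu\subset V_\sigma\otimes V_\eta\subset V_\la^{\otimes n}\otimes V'$.

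The principal obstacle is the uniformity in $n$ of the boundary correction: showing a single finite $F$ suffices. I plan to exploit finite generation of the tensor semigroup $\{(\sigma,n)\in X^*(T)_{\geq 0}\times\Z_{\geq 0}:V_\sigma\subset V_\la^{\otimes n}\}$ (Brion, Knop), which forces the gap between $n\,\Conv(W\la)\cap X^*(T)_{\geq 0}\cap(n\la+Q)$ and the actual set of highest weights of irreducible factors of $V_\la^{\otimes n}$ to remain bounded independently of $n$, yielding the desired finite $F$.
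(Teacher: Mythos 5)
Your overall structure---prove $(a)\Leftrightarrow(b)$ by classical root-system theory, $(c)\Rightarrow(b)$ by an asymptotic Cartan-component argument, and $(a)\Rightarrow(c)$ by constructing $V'$---differs from the paper, which runs the cycle $(a)\Rightarrow(b)\Rightarrow(c)\Rightarrow(a)$, but of course that restructuring is harmless. Your $(a)\Leftrightarrow(b)$ via the standard fact that a dominant $\mu$ lies in $\Conv(W\la)$ iff $\mu\leq\la$ is fine, and your $(c)\Rightarrow(b)$ is essentially the paper's $(c)\Rightarrow(a)$ argument transposed to land in the convex hull; both are sound.

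The real divergence---and the real problem---is in $(a)\Rightarrow(c)$. The paper's key device is the elementary Lemma~\ref{convex}: for $X=W\la$ there is a \emph{fixed} compact $Y$ with $\Conv(nX)\subset Y+\underbrace{X+\cdots+X}_{n}$ for all $n$. Combined with the observation that any highest weight $\chi$ of a factor of $V_\mu^{\otimes n}$ lies in $n\Conv(W\la)$, this gives the explicit decomposition $\chi=\sum_{i=1}^n w_i\la+w'\nu$ with $\nu$ in a finite set, whereupon the multi-factor PRV theorem applied directly to $V_\la^{\otimes n}\otimes V_\nu$ finishes. Note in particular that the paper never needs to produce an intermediate $\sigma$ that is itself a highest weight of a factor of $V_\la^{\otimes n}$; PRV is applied to the full $(n+1)$-fold product.

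Your version instead routes through the Brion--Knop finite generation of the tensor semigroup $S=\{(\sigma,n):V_\sigma\subset V_\la^{\otimes n}\}$ and then \emph{asserts} that this ``forces the gap \dots to remain bounded independently of $n$.'' That assertion is the crux, and it is not a consequence you can simply read off from finite generation: what you actually need is that the integral closure (saturation) of $S$ is a \emph{finitely generated $S$-module}, so that every element of the saturation differs from an element of $S$ by one of finitely many correctors. That is true for finitely generated affine semigroups, but it is an additional theorem, and even granting it you still have to handle the fact that the corrector shifts the $n$-coordinate (so the nearby $\sigma'$ lives in some $V_\la^{\otimes n'}$ with $n'<n$ and must be pushed up by tensoring with Cartan components), and you have to justify that $\nu=\ov{\sigma+w\eta}$ can always be arranged with $\eta=\ov{\nu-\sigma}$. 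None of these steps is carried out, and as written the argument has a genuine gap at exactly the point where the paper inserts its self-contained Lemma~\ref{convex}. The lesson of the paper's proof is that one can sidestep the tensor-semigroup machinery entirely: the quantization lemma for convex hulls of finite sets does the uniform boundary bookkeeping by hand, and then PRV closes the argument without ever needing $\sigma$ to actually occur in $V_\la^{\otimes n}$.
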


\begin{proof}

$(a)\implies (b)$.
Notice that since $\mu$ is dominant, we have $w\mu\leq\mu$ for all $w\in W$.
Therefore our assumption $\mu\leq\la$ implies that $w\mu\leq\la$ for all $w\in W$.
Thus our assertion follows from the following lemma.

\begin{lemma}
\label{worbit}
Let $\mu$ and $\la$ be two weights of $G$ such that  $w\mu\leq\la$ for all $w\in W$.
Then $\Conv(W\mu)\subset \Conv(W\lambda)$.
\end{lemma}

\begin{proof}
Suppose $\Conv(W\mu)$ is not contained in $\Conv(W\lambda)$.
Then there exists $w\in W$ such that $w\mu\not\in \Conv(W\lambda)$. As
$\Conv(W\lambda)$ is $W$-stable, it follows that $\mu\not\in \Conv(W\lambda)$, and hence also
$w\mu\notin \Conv(W\lambda)$  for all $w\in W$.

By the separation lemma, there exists  $\theta\in V^*$ such that
$\theta(\mu) > \theta(w\la)$ for all $w\in W$.
This is an open condition, so we may choose $\theta$ such that $\theta(\alpha_i)\neq 0$
for each $i=1,\ldots,r$. Replacing $\theta$ by $w\theta$ and $\mu$ by $w\mu$ for some
$w\in W$, we may assume in addition that $\theta(\alpha_i)>0$ for each $i=1,\ldots,r$.

By our assumption, $\mu=\la-\sum_{i=1}^r x_i\al_i$ with each $x_i\geq 0$. Therefore
$$\theta(\mu)=\theta(\la)-\sum_{i=1}^r x_i\theta(\al_i)\leq\theta(\la),$$
contradicting our assumption
$\theta(\mu) > \theta(\la)$.
\end{proof}

$(b)\implies (c)$.
We start with the following lemma.
\begin{lemma}
\label{convex}
Let $X$ be a finite subset of a finite dimensional Euclidean space $E$. Then there exists a compact subset $Y$ of $E$ such that
\begin{equation}
\label{quantize}
\Conv(nX) \subset Y+\underbrace{X+X+\cdots+X}_n
\end{equation}
for all positive integers $n$.
\end{lemma}
\begin{proof}
Let $m:= |X|$, and let $Y$ denote the ball of vectors of norm at most
$R := 2m \max_{x\in X} \Vert x\Vert.$
We claim that inclusion (\ref{quantize}) holds for this $Y$.

Let $X$ be the set $\{x_1,\ldots,x_m\}$. Then every vector in $\Conv(nX)$ is of the form
$$v := a_1 n x_1 + \cdots + a_m n x_m,$$
where the $a_i$ are non-negative and sum to $1$.  Let $b_i := \lfloor na_i\rfloor$ for $i\ge 2$
and $b_1 = n-(b_2+\cdots+b_m)$.  As $|b_i - a_i n| < 1$ for $i>1$, we have
$$|b_1 - a_1 n| = |n-(b_2+\cdots+b_m) + (a_2 n+\cdots+a_m n-n)| < m-1.$$
Thus,
$$\Vert(b_1 x_1 + \cdots + b_m x_m) - v \Vert \le \sum_{i=1}^m |b_i - a_i n| \Vert x_i\Vert
< R,$$
and of course $b_1 x_1+\cdots+b_m x_m$ belongs to the $n$-fold iterated sum of $X$.
\end{proof}

Now we return to the proof of the proposition. We assume that $\Conv(W \mu)\subset \Conv(W \lambda)$, let $X = W\lambda$, and fix a compact set $Y$ satisfying (\ref{quantize}).  Denote by $V'$  the direct sum of all representations $V_\nu$ where $\nu$ ranges over the dominant weights in $W Y$.

If $n$ is a positive integer, the highest weight $\chi$ of any irreducible factor of $V_\mu^{\otimes n}$ is a weight of
$V_\mu^{\otimes n}$. Therefore $\chi\leq n\mu$, hence by the implication $(a)\implies(b)$ shown above, $\chi$ is an element
of
\[
\Conv(Wn\mu)= n\Conv(W\mu)\subset n\Conv(W\la).
\]
By (\ref{quantize}), $\chi$ can be written as a sum of $n$ elements of $W\la$ and an element of $W Y$,
which is necessarily in the weight group.
Thus $\chi$ has the form $\sum_{i=1}^n w_i\la+w'\nu$ for some $w_1,\ldots,w_n,w'\in W$ and some highest weight $\nu$ of $V'$.

Using the conjecture of Parthasarthy, Ranga-Rao, and Varadarajan, proven by Kumar \cite{Ku}, we conclude that $V_{\chi}$
is an irreducible factor of $V_\lambda^{\otimes n}\otimes V_{\nu}$, hence also an irreducible factor of $V_\lambda^{\otimes n}\otimes V'$.

$(c)\implies(a)$.
Now suppose that there exists a finite dimensional representation $V'$ of $G$ such that
for every $n$, every irreducible factor of $V_\mu^{\otimes n}$ must be a factor of
$V_\lambda^{\otimes n}\otimes V'$ as well.  Then every weight of $V_\mu^{\otimes n}$
must be a weight of $V_\lambda^{\otimes n}\otimes V'$, and in particular, this is true for the weight
$n\mu$. Thus $n\mu=\la_n+\nu_n$ for some weights $\la_n$ of  $V_\lambda^{\otimes n}$ and $\nu_n\in V'$.

Note that $\la_n=n\la-\sum_{i=1}^r n_i\al_i$ for some $n_i\in\Z_{\geq 0}$. Therefore $n\mu$ is equal to
$n\la-\sum_{i=1}^r n_i\al_i+\nu_n$, hence for each $n\in\N$ we have an equality
\[
\la-\mu=\sum_{i=1}^r \frac{n_i}{n}\al_i-\frac{1}{n}\nu_n.
\]
Next we recall that the set of weights  of $V'$ is finite, so the expression $\frac{1}{n}\nu_n\in X^*(T)\otimes\R$
tends to zero when $n$ tends to infinity. Hence the difference $\la-\mu$ equals $\sum_{i=1}^r x_i\al_i$, where each $x_i=\lim_{n\to\infty}\frac{n_i}{n}$
is non-negative. This shows that $\mu\leq\la$.
\end{proof}

\begin{cor}
\label{root datum}
The root datum of $G$ can be reconstructed from the semiring $K_0^+[G]$.
\end{cor}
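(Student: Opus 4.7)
The plan is to reconstruct the root datum $(X^*(T),\Phi,X_*(T),\Phi^\vee)$ of $G$ directly from the semiring $K_0^+[G]$, with Proposition~\ref{order} as the principal input. My strategy is first to extract from the semiring the triple $(X^*(T), X^*(T)^+, \leq)$ --- character lattice, monoid of dominant weights, and standard partial order --- and then to read off the simple roots and simple coroots by convex-geometric arguments in this lattice.

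First I identify the irreducible elements of $K_0^+[G]$ with $X^*(T)^+$ via $[V_\la]\leftrightarrow\la$. The partial order $\leq$ on $X^*(T)^+$ is recovered from Proposition~\ref{order}, whose condition (c) is purely semiring-theoretic. For $\la,\mu\in X^*(T)^+$, the product $[V_\la]\cdot [V_\mu]$ in $K_0^+[G]$ decomposes into irreducibles, and its unique irreducible summand maximal in $\leq$ is the Cartan component $V_{\la+\mu}$; this recovers the monoid operation on $X^*(T)^+$. Taking Grothendieck groups yields $X^*(T)$, and I set $X_*(T):=\Hom(X^*(T),\Z)$. Extending $\leq$ to $X^*(T)$, the positive root cone is
\[
C_+ = \{\la-\mu : \la,\mu\in X^*(T)^+,\ \mu\leq\la\},
\]
which is therefore also semiring-theoretic.

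Next, the simple roots $\al_1,\ldots,\al_r$ appear as the primitive generators of the extreme rays of the simplicial cone $C_+$ inside the root lattice $Q:=\Z\langle C_+\rangle$; they are primitive in $Q$ because they form a $\Z$-basis of $Q$. For the simple coroots, I consider the dual cone in $X_*(T)\otimes\R$ of the real cone $\R_{\geq 0}\,X^*(T)^+$: its extreme rays are spanned by the $\al_i^\vee$. For each $i$, let $\xi_i\in X_*(T)$ be the primitive element on the extreme ray matched with $\al_i$, uniquely determined by the sign conditions $\langle\al_i,\xi_i\rangle>0$ and $\langle\al_j,\xi_i\rangle\leq 0$ for $j\neq i$. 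Then $\al_i^\vee$ is the unique positive integer multiple of $\xi_i$ satisfying $\langle\al_i,\al_i^\vee\rangle=2$.

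The hard part will be this final scaling step: as the comparison $\SL_2$ versus $\mathrm{PGL}_2$ shows --- where $\al^\vee$ is not primitive in $X_*(T)$ for $\mathrm{PGL}_2$ --- it is not enough to take $\al_i^\vee=\xi_i$, and the normalization by pairing with $\al_i$ is essential. One must also verify that the resulting rational multiple of $\xi_i$ genuinely lies in $X_*(T)$; this is automatic because $G$ itself carries a root datum in which the coroots satisfy precisely these conditions, so the correct integer multiple does exist on the extreme ray.
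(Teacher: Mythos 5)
Your construction of the partial order, the monoid of dominant weights, and the lattices $X^*(T)$ and $X_*(T)$ matches the paper's Steps~1--3. But the identification of the simple roots via the cone $C_+$ has a genuine gap, and it already fails for $G=\SL_2$.

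The problem is the claim that the simple roots form a $\Z$-basis of $Q:=\Z\langle C_+\rangle$. Note first that the order $\le$ recovered from Proposition~\ref{order}(c) is the one of~\ref{notation-weights}(c), in which the coefficients $x_i$ are non-negative \emph{reals}, not integers (this is exactly what the proof of $(c)\Rightarrow(a)$ produces, via limits $x_i=\lim n_i/n$). Consequently $C_+$ equals $\bigl(\sum_i\R_{\ge 0}\al_i\bigr)\cap X^*(T)$, and $Q$ is the sublattice of $X^*(T)$ this set generates --- which is in general strictly larger than the root lattice $\Z\langle\al_1,\dots,\al_r\rangle$. For $\SL_2$, with fundamental weight $\om$ and $\al=2\om$, one has $\om\le 2\om$ in the paper's order (indeed $\Conv(W\om)\subset\Conv(W\cdot 2\om)$, and you can verify condition~(c) directly with $V'=V_\om$), so $C_+=\Z_{\ge 0}\om$ and $Q=\Z\om=X^*(T)$, whereas the root lattice is $2\Z\om$. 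The primitive generator of the unique extreme ray inside $Q$ is then $\om$, not $\al=2\om$. Feeding this wrong $\al$ into your coroot normalization $\langle\al,\al^\vee\rangle=2$ then yields $2\om^*$ instead of the true coroot $\om^*$, and the procedure outputs the root datum of $\mathrm{PGL}_2$ rather than that of $\SL_2$. The same phenomenon occurs whenever a short simple root is divisible in $X^*(T)$, e.g.\ $\al_2=2e_2$ for $\mathrm{Sp}_4$.

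The paper's Step~4 gets around precisely this issue: rather than looking at the real cone, it characterizes $\al$ as a $\le$-minimal nonzero weight for which there exists a dominant $\la$ with $V_{2\la-\al}$ an irreducible summand of $V_\la^{\otimes 2}$. The key point is that any such $\al$ is automatically a \emph{non-negative integer} combination of simple roots (since $2\la-\al$ is a weight of $V_\la^{\otimes 2}$), which pins down the correct lattice scale; existence for each actual simple root is supplied by Kumar's PRV theorem. Your convex-geometric picture of the dominant cone and its dual is fine as a heuristic, but you need an input that sees the root lattice and not just the rational ray directions, and only the representation-theoretic criterion (or some equivalent integrality statement) provides that.
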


\begin{proof}
We divide our construction into steps as follows.

{\bf Step 1.} First we claim that a partially ordered set of dominant weights of $G$ can be reconstructed from the semiring $K_0^+[G]$. 

For this we note that the map $\mu\mapsto [V_{\mu}]$
gives a bijection between the set of dominant weights of $G$  and the set of irreducible objects of of $K_0^+[G]$. 

Proposition~\ref{order} implies that for two dominant weights $\mu,\la$ of $G$  we have $\mu\le \lambda$ if and only if there exists $\theta\in K_0^+[G]$ such that for all $n\in \N$ and all irreducible elements $[V_\nu]\in K_0^+[G]$, we have
$$[V_\mu]^n - [V_\nu] \in K_0^+[G]\Rightarrow [V_\lambda]^n\theta-[V_\nu] \in K_0^+[G].$$

{\bf Step 2.}
For every triple $\la,\mu,\nu$ of dominant weights  of $G$,
we have $\nu=\la+\mu$
if and only if $\la$ is the largest dominant weight such that $V_{\la}$ is an irreducible factor of $V_{\mu}\otimes V_{\nu}$. Therefore
Proposition~\ref{order} implies that the semigroup of dominant weights of $G$ can be reconstructed  from  the semiring $K_0^+[G]$.

{\bf Step 3.} The group of weights $X^*(T)$ of $G$ is the group completion of the semigroup of dominant weights.  The group of coweights of $G$, $X_*(T)$,  is given as the group of homomorphisms
$$X_*(T)=\Hom(X^*(T),\Z).$$
Note that there is a canonical isomorphism between $\Aut(X^*(T))$ and $\Aut(X_*(T))$.

{\bf Step 4.} We claim that $\al\in X^*(T)$ is a simple root if and only if it is a minimal weight of $T$ for which there exists a dominant weight $\lambda\in X^*(T)$ such that   $V_{2\lambda-\al}$  is an irreducible factor of $V_{\lambda}^{\otimes 2}$.

Indeed, for every simple root $\al$, we choose any dominant weight $\la$ such that 
$\langle\check{\al},\la\rangle=1$. Then it follows from Kumar's theorem \cite{Ku} that 
$2\lambda-\alpha = \lambda + s_{\alpha}(\lambda)$ is the highest weight of a factor of $V_\lambda^{\otimes 2}$. Conversely, if $V_{2\lambda-\beta}$ is an irreducible factor of $V_\lambda^{\otimes 2}$, then $2\lambda-\beta$ is a weight of $V_\lambda^{\otimes 2}$, thus
$\beta$ is of the form $a_1\alpha_1+\cdots+a_r\alpha_r$ with all $a_i$ non-negative integers. Hence any minimal such $\beta$ has to be a simple root.

By the Step 1, the set of simple roots can  therefore be reconstructed from the semiring $K_0^+[G]$. 

{\bf Step 5.} For each simple root $\al$ of $G$ the corresponding simple coroot $\check{\al}\in X_*$ can be characterized by the following condition:
for every dominant weight $\mu$ the pairing $\langle\check{\al},\mu\rangle$ is the unique element  $m\in \Z_{\geq 0}$ such that
$2\mu-m\al$ is dominant, but $2\mu-(m+1)\al$ is not dominant.  Indeed,
\[
\langle\check{\al},2\mu-m\al\rangle=2\langle\check{\al},\mu\rangle-m\langle \check{\al},\al\rangle= 2\langle\check{\al},\mu\rangle-2m
\]
is non-negative if and only if $m\leq \langle\check{\al},\mu\rangle$, while for every other simple root $\al'\neq\al$ of $G$ with a corresponding
simple coroot $\check{\al}'$, we have
\[
\langle\check{\al}',2\mu-m\al\rangle=2\langle\check{\al}',\mu\rangle-m\langle \check{\al}',\al\rangle\geq 2\langle\check{\al}',\mu\rangle\geq 0
\]
for all $m\geq 0$. Thus the set of simple coroots can also
be reconstructed from $K_0^+[G]$.

{\bf Step 6.} After having reconstructed  all simple coroots $\check{\al}$, we reconstruct all simple reflections $s_{\al}\in \Aut(X_*(T))$, hence the Weyl group $W\subset \Aut(X_*(T))$, as the subgroup generated by simple reflections. Next we reconstruct the set of all roots of $G$, as images of the simple roots under $W$,
and likewise for the coroots of $G$.
This completes the reconstruction of the whole root datum of $G$.
\end{proof}

\begin{Emp}
\begin{proof}[Proof of Theorem~\ref{kgroups}]
An isomorphism of semirings $\phi:K_0^+[G]\isom K_0^+[H]$ induces  a bijection between irreducible objects, hence a bijection between  dominant weights of $G$ and $H$, which we  denote by $\wt{\phi}$.

The proof of  Corollary~\ref{root datum} shows that $\wt{\phi}$ extends to the isomorphism between the root data of $G$ and $H$, hence it comes from an isomorphism of algebraic groups $\rho:H\isom G$.

We claim that $\rho^*:K_0^+[G]\isom K_0^+[H]$ is equal to $\phi$. It is enough to show that for each dominant weight $\la$ of $G$, we have $\phi([V_{\la}])=\rho^*([V_{\la}])$.  Both expressions, however, are equal to $[V_{\wt{\phi}(\la)}]$.

Conversely, if $\rho:H\to G$ is an isomorphism such that $\rho^*=\phi$, then for each dominant weight $\mu$ of $G$ we have $\rho^*([V_{\la}])=\phi([V_{\la}])=[V_{\wt{\phi}(\la)}]$, so $\rho$  induces the isomorphism $\wt{\phi}$ between the root data, hence $\rho$ is unique up to conjugation.

\end{proof}
\end{Emp}

\section{The Tannakian formalism}
In this section we are going to prove Theorem~\ref{main}. Throughout the section we will assume that  the hypotheses of Theorem~\ref{main} hold. For each irreducible representation $\om$ of $G$ we choose an irreducible representation $\rho_{\om}$
of $\Gm$ such that $[\rho_\om]=\phi([\om])$.

\begin{lemma}
\label{isom}
(a) Let $\om'$ and $\om''$ be two irreducible representations of $G$, and
let $\om'\otimes\om''\cong \oplus\om_i$ be a decomposition of their tensor product
into irreducibles. Then $\rho_{\om'}\otimes\rho_{\om''}\cong \oplus\rho_{\om_i}$.

(b)  If $\om$ is a trivial (one-dimensional) representation $\1$ of $G$, then $\rho_{\om}$ is a trivial representation of $\Gm$.

(c) The representation $\om$ is one-dimensional if and only if $\rho_{\om}$ is one-dimensional.

(d) For each irreducible representation $\om$ of $G$, we have $\rho_{\om^*}\cong(\rho_{\om})^*$.

(e)  Let $\om'$ and $\om''$ be two irreducible representations
of $G$ such that $\rho_{\om'}\cong\rho_{\om''}$. Then restrictions $\om'|_{G^{\der}}$ and $\om''|_{G^{\der}}$ are isomorphic.
\end{lemma}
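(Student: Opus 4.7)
The plan is to establish (a)--(e) in order, deducing each from the hypothesis that $\phi$ is a semiring homomorphism sending irreducibles to irreducibles, together with basic representation theory.

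For (a), apply $\phi$ to the identity $[\om']\cdot[\om''] = \sum_{i}[\om_{i}]$ in $K_{0}^{+}[G]$; since $\phi$ is a semiring homomorphism this yields $[\rho_{\om'}\otimes\rho_{\om''}] = [\bigoplus_{i}\rho_{\om_{i}}]$ in $K_{0}^{+}[\Gm]$, giving the required identification. For (b), the class $[\mathbf{1}]$ is an irreducible idempotent of $K_{0}^{+}[G]$, so $[\rho_{\mathbf{1}}] = \phi([\mathbf{1}])$ is an irreducible idempotent of $K_{0}^{+}[\Gm]$; hence $\rho_{\mathbf{1}}\otimes\rho_{\mathbf{1}}\cong\rho_{\mathbf{1}}$, so $\dim\rho_{\mathbf{1}} = 1$, and then $\rho_{\mathbf{1}}$ is a character $\chi$ with $\chi^{2} = \chi$, hence trivial.

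For (c), the forward direction follows from $\om\otimes\om^{*}\cong\mathbf{1}$ together with (a) and (b): the product $\rho_{\om}\otimes\rho_{\om^{*}}$ is trivial and one-dimensional, forcing $\dim\rho_{\om}=1$. For the converse, if $\dim\om\geq 2$ then $\om\otimes\om\cong\mathrm{Sym}^{2}\om\oplus\wedge^{2}\om$ decomposes as a sum of at least two nonzero irreducibles in $\Rep(G)$, so by (a) the product $\rho_{\om}\otimes\rho_{\om}$ is a sum of at least two nonzero irreducibles of $\Gm$; thus $(\dim\rho_{\om})^{2}\geq 2$, giving $\dim\rho_{\om}\geq 2$. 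For (d), since $\om$ is irreducible the trivial representation is a direct summand of $\om\otimes\om^{*}$ of multiplicity one, so by (a) and (b) the trivial representation of $\Gm$ is a summand of $\rho_{\om}\otimes\rho_{\om^{*}}$; this produces a nonzero $\Gm$-invariant vector, equivalently a nonzero $\Gm$-equivariant map $\rho_{\om}^{*}\to\rho_{\om^{*}}$, and Schur's lemma applied to these two irreducibles supplies the isomorphism $(\rho_{\om})^{*}\cong\rho_{\om^{*}}$.

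The main obstacle is (e), which combines the previous parts with a Schur-style argument in $\Rep(G)$. Given $\rho_{\om'}\cong\rho_{\om''}$, Schur's lemma furnishes a nonzero $\Gm$-invariant element of $\rho_{\om'}\otimes\rho_{\om''}^{*}\cong\mathrm{End}(\rho_{\om''})$; by (d) this rewrites as $\rho_{\om'}\otimes\rho_{(\om'')^{*}}$, and by (a) applied to the decomposition of $\om'\otimes(\om'')^{*}$ in $\Rep(G)$, some irreducible summand $V$ of $\om'\otimes(\om'')^{*}$ must satisfy $\rho_{V}\cong\mathbf{1}$. By (c), such a $V$ is one-dimensional; denote this character of $G$ by $\chi$. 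The inclusion $\chi\subset\om'\otimes(\om'')^{*}$ in $\Rep(G)$ is equivalent via adjunction to a nonzero $G$-equivariant map $\chi\otimes\om''\to\om'$ between irreducibles, which by Schur is an isomorphism $\om'\cong\om''\otimes\chi$. Since every character of $G$ restricts trivially to $G^{\der}$, we conclude $\om'|_{G^{\der}}\cong\om''|_{G^{\der}}$, completing the proof.
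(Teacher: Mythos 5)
Your arguments for (b)--(e) essentially reproduce the paper's, occasionally with a bit more detail (the paper proves (c) via the observation that $\om$ is one-dimensional if and only if $\om\otimes\om$ is irreducible, whereas you go through $\om\otimes\om^*$ and $\mathrm{Sym}^2\oplus\wedge^2$; both work). But there is a genuine gap in (a), which is the foundational step all the others rely on.

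Applying $\phi$ gives the equality $[\rho_{\om'}\otimes\rho_{\om''}]=[\bigoplus_i\rho_{\om_i}]$ in $K_0^+[\Gm]$. This is \emph{not} yet the ``required identification'': by definition, $[\,\cdot\,]$ records only the class of the semisimplification, so the equality says the semisimplification of $\rho_{\om'}\otimes\rho_{\om''}$ is $\bigoplus_i\rho_{\om_i}$. To upgrade this to an actual isomorphism $\rho_{\om'}\otimes\rho_{\om''}\cong\bigoplus_i\rho_{\om_i}$ you must know that $\rho_{\om'}\otimes\rho_{\om''}$ is itself semisimple. For representations of an arbitrary abstract group $\Gm$ this is not automatic; it is Chevalley's theorem that over a field of characteristic zero the tensor product of two finite-dimensional semisimple representations is again semisimple (the paper cites \cite[p.~88]{Ch} for this). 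The distinction matters downstream: in (d) and (e) you extract a nonzero $\Gm$-invariant vector from the fact that $\1$ appears in $\rho_{\om}\otimes\rho_{\om^*}$, but a trivial \emph{subquotient} need not furnish an invariant vector --- only a trivial \emph{subrepresentation} does, and that is exactly what semisimplicity guarantees. With Chevalley's theorem inserted into (a), the rest of your argument goes through and matches the paper's.
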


\begin{proof}
(a) By hypothesis, we have $[\om']\cdot[\om'']=\sum_i [\om_i]$. Since $\phi$ is a homomorphism of semirings we conclude that
\[
[\rho_{\om'}\otimes\rho_{\om''}]=\phi([\om'])\cdot\phi([\om''])=\sum_i \phi([\om_i])=[\oplus\rho_{\om_i}].
\]
Since   $\rho_{\om'}$ and $\rho_{\om''}$ are irreducible, their tensor product  $\rho_{\om'}\otimes\rho_{\om''}$ is semisimple (see \cite[p.~88]{Ch}).
Therefore $\rho_{\om'}\otimes\rho_{\om''}\cong \oplus\rho_{\om_i}$.

(b) This follows from the observation that $\om=\1$ if and only if $\om\otimes\om\cong\om$.

(c) This follows from the observation that $\om$ is one-dimensional if and only if $\om\otimes\om$ is irreducible.

(d) Note that the representation $\om\otimes \om^*$ has a trivial subrepresentation $\1$. Therefore by (a) and (b), the representation
 $\rho_{\om^*}\otimes\rho_{\om}$ has a subrepresentation $\rho_{\1}\cong \1$. Since $\rho_{\om}$ and
 $\rho_{\om^*}$ are irreducible, this implies that $\rho_{\om^*}\cong(\rho_{\om})^*$.

 (e) If $\rho_{\om'}\cong\rho_{\om''}$, then   the tensor product $\rho_{\om'}\otimes(\rho_{\om''})^*\cong \rho_{\om'}\otimes\rho_{\om''^*}$ contains a
 subrepresentation $\1$. Using  (a) and (c), we conclude that the tensor product
 $\om'\otimes\om''^*$ has a one-dimensional subrepresentation $\xi$. Since $\om'$ and $\om''$ are irreducible, we conclude that
 $\om'\cong\om''\otimes\xi$, thus the restrictions $\om'|_{G^{\der}}$ and $\om''|_{G^{\der}}$ are isomorphic.
\end{proof}

\begin{Emp}
For every irreducible representation $\om$ of $G$ we denote by $z_{\om}$ its central character.
Let $Z$ be the center of $G$, and denote by  $\iota$ the embedding $\Gm\isom\Gm\times\{1\}\hookrightarrow\Gm\times Z(K)$.
\end{Emp}

\begin{lemma}
\label{density}
(a) There exists a unique homomorphism of semirings
$\widetilde{\phi}:K_0^+[G]\to K_0^+[\Gm\times Z(K)]$  such that
\begin{equation}
\label{center}
\widetilde{\phi}([\om])=[\rho_{\om}\boxtimes z_{\om}] \text{  for each irreducible } \om.
\end{equation}

(b) Moreover, $\widetilde{\phi}$ is injective, maps irreducibles to irreducibles and satisfies $\iota^*\circ \widetilde{\phi}=\phi$.

(c) Assume that there exists a homomorphism $\rho:\Gm\to G(K)$ such that $\rho^*=\phi$, and let
$\wt{\rho}:\Gm\times Z(K)\to G(K)$ be a homomorphism defined by $\wt{\rho}(\gm,z):=\rho(\gm)\cdot z$. Then $\wt{\rho}^*=\wt{\phi}$.
\end{lemma}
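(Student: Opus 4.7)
For part (a), uniqueness is automatic because $K_0^+[G]$ is the free commutative monoid on isomorphism classes of irreducible representations, so the formula $\widetilde\phi([\om])=[\rho_\om\boxtimes z_\om]$ pins $\widetilde\phi$ down additively. Existence reduces to checking multiplicativity on pairs of irreducibles. Given irreducibles $\om'$, $\om''$ with $\om'\otimes\om''\cong\bigoplus_i\om_i$, Lemma~\ref{isom}(a) supplies $\rho_{\om'}\otimes\rho_{\om''}\cong\bigoplus_i\rho_{\om_i}$, while central characters multiply under tensor products, so $z_{\om_i}=z_{\om'}\cdot z_{\om''}$ for every $i$. Rewriting $(\rho_{\om'}\boxtimes z_{\om'})\otimes(\rho_{\om''}\boxtimes z_{\om''})$ as $(\rho_{\om'}\otimes\rho_{\om''})\boxtimes(z_{\om'}z_{\om''})$ then yields the required identity.

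For part (b), the external tensor of an irreducible $\rho_\om$ with a one-dimensional $z_\om$ is again irreducible as a representation of $\Gm\times Z(K)$, and pulling back by $\iota$ recovers $\rho_\om=\phi([\om])$, which gives $\iota^*\circ\widetilde\phi=\phi$ on irreducibles and therefore everywhere. For injectivity, since both source and target are free commutative monoids on their irreducibles and $\widetilde\phi$ carries irreducibles to irreducibles, it suffices to show $[\om']\neq[\om'']$ implies $[\rho_{\om'}\boxtimes z_{\om'}]\neq[\rho_{\om''}\boxtimes z_{\om''}]$. If the two are isomorphic, restricting to $\{1\}\times Z(K)$, on which each side acts by a single scalar character, forces $z_{\om'}=z_{\om''}$, and restricting to $\Gm\times\{1\}$ yields $\rho_{\om'}\cong\rho_{\om''}$. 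Lemma~\ref{isom}(e) then gives $\om'|_{G^{\der}}\cong\om''|_{G^{\der}}$. Combining this with the equality of central characters and the identity $G=Z\cdot G^{\der}$ (which holds since $G$ is connected reductive), any $G^{\der}$-equivariant isomorphism automatically intertwines the central action and hence is $G$-equivariant, so $\om'\cong\om''$.

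Part (c) is a short computation: for irreducible $\om$ and $(\gm,z)\in\Gm\times Z(K)$ we have $\om(\wt\rho(\gm,z))=\om(\rho(\gm))\,z_\om(z)$, hence $\om\circ\wt\rho\cong(\om\circ\rho)\boxtimes z_\om=\rho_\om\boxtimes z_\om$, using the assumption $\rho^*=\phi$. Thus $\wt\rho^*$ agrees with $\widetilde\phi$ on irreducibles, and so by the uniqueness in (a) they coincide on all of $K_0^+[G]$.

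The step I expect to require the most care is the injectivity in (b): it is the only place where the purely formal consequences of $\phi$ being a semiring map must be combined with the structural fact $G=Z\cdot G^{\der}$ for connected reductive $G$, in order to recover an irreducible representation from its restriction to $G^{\der}$ together with its central character.
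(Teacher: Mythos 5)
Your proof is correct and follows essentially the same approach as the paper's: define $\widetilde\phi$ on irreducibles using freeness of the additive semigroup, verify multiplicativity via Lemma~\ref{isom}(a) and multiplicativity of central characters, reduce injectivity to irreducibles via Lemma~\ref{isom}(e) together with the central character, and check (c) on irreducibles. The one difference is that you spell out more explicitly the final step of (b) — that agreement on $G^{\der}$ plus matching central characters and $G=Z\cdot G^{\der}$ forces $\om'\cong\om''$ — which the paper leaves as a one-line assertion; this is a welcome clarification rather than a departure.
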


\begin{proof}
(a) Since the additive Grothendieck semigroup $K_0^+[G]$ is freely generated by irreducible elements $[\om]$, there exists a unique homomorphism of semigroups
$\widetilde{\phi}:K_0^+[G]\to K_0^+[\Gm\times Z(K)]$ which satisfies (\ref{center}). It remains to show  that for every two representations $\om',\om''$ of $G$ we have an equality
\begin{equation}
\label{mult}
\widetilde{\phi}([\om']\cdot [\om''])=\widetilde{\phi}([\om'])\cdot\widetilde{\phi}([\om'']).
\end{equation}
By the additivity of $\widetilde{\phi}$, we may assume that $\om'$ and $\om''$ are irreducible.
Let $\om'\otimes\om''\cong \oplus\om_i$ be a decomposition of their tensor product
into irreducibles. Then $[\om']\cdot [\om'']=\sum_i[\om_i]$, hence the left hand side of (\ref{mult}) is equal to
\[
\widetilde{\phi}(\sum_i[\om_i])=\sum_i\widetilde{\phi}([\om_i])=\sum_i[\rho_{\om_i}\boxtimes z_{\om_i}],
\]
while the right hand side of (\ref{mult}) is equal to
\[
[\rho_{\om'}\boxtimes z_{\om'}]\cdot[\rho_{\om''}\boxtimes z_{\om''}]=[(\rho_{\om}\otimes\rho_{\om''})\boxtimes z_{\om'} z_{\om''}].
\]
Since the central character of each $\om_i$ is equal to $z_{\om'}z_{\om''}$, equality (\ref{mult}) follows from Lemma~\ref{isom} (a).

(b) By construction, for each irreducible element $[\om]$, the element   $\widetilde{\phi}([\om])=[\rho_{\om}\boxtimes z_{\om}]$ is irreducible, and
$$\iota^*\widetilde{\phi}([\om])=\iota^*([\rho_{\om}\boxtimes z_{\om}])=[\rho_{\om}]=\phi([\om]).$$
This  implies that $\wt{\phi}$ maps irreducibles to irreducibles
and satisfies   $\iota^*\circ \widetilde{\phi}=\phi$.

Finally, since as additive semigroups,  $K_0^+[G]$ and $K_0^+[\Gm]$ are freely generated by irreducibles, in order to show that $\wt{\phi}$ is injective, it is enough to show that it is injective on irreducibles.

Let $\om'$ and $\om''$ are two irreducible representations of $G$ such that $\wt{\phi}([\om'])=\wt{\phi}([\om''])$.
Then $\rho_{\om'}\cong \rho_{\om''}$ and $z_{\om'}=z_{\om''}$. Using Lemma~\ref{isom} (e), we conclude that  $\om'|_{G^{\der}}\cong \om''|_{G^{\der}}$ and
$\om'|_{Z}=\om''|_Z$. Hence $\om'\cong\om''$, implying the injectivity.

(c) It is enough to show that $\wt{\rho}^*([\om])=\wt{\phi}([\om])$ when $[\om]$ is irreducible. Both expressions, however, are equal to $[\rho_{\om}\boxtimes z_{\om}]$.

\end{proof}

\begin{Emp}

\begin{proof}[Proof of  Theorem~\ref{main}]
First we will show the existence of $\rho$ under the assumption that  $\phi:K_0^+[G]\to K_0^+[\Gm]$ is injective.

Let $\cC$ be the full subcategory of $\Rep\Gm$ consisting of semisimple representations
$\tau\in \Rep\Gm$ such that $[\tau]=\phi([\om])$ for some $[\om]\in K_0^+[G]$. Since $\phi([\om])$ is irreducible for each irreducible $[\om]$,
$\cC$  is a semisimple abelian subcategory.  Since $\phi$ is a homomorphism of semirings, $\cC$ is a rigid tensor subcategory  of $\Rep\Gm$
(use Lemma~\ref{isom} (a)--(d)), hence a Tannakian category. Let $f:\cC\to\Vect$ be the forgetful functor, and let $H:=\Aut^{\otimes}(f)$  be the group of tensor automorphisms of $f$.

By the Tannakian formalism (see, for example, \cite[Thm 2.11]{DM})  $H$ is an affine group scheme, and $f$
induces an equivalence of tensor categories $\cC\isom\Rep H$.  Since $G$ is an algebraic group, the category $\Rep G$ has a tensor generator $\om$.
Then an element $\rho_{\om}\in\Rep\Gm$ such that $[\rho_{\om}]=\phi([\om])$ must be a tensor generator of $\cC\cong\Rep H$. This implies that
$H$ is an algebraic group (see \cite[Prop 2.20]{DM}).  Moreover, since $\cC\cong\Rep H$ is semisimple, the group $H$ is reductive
(see \cite[Prop 2.23]{DM}).

Every element of $\gm\in\Gm$ defines a tensor automorphism of $f$ over $K$.
Hence we get a group homomorphism $\pi:\Gm\to H(K)$ such that
$\pi^*:\Rep H\to\Rep\Gm$ is the inverse of the equivalence $f:\cC\isom\Rep H$.

By construction, the homomorphism $\phi:K_0^+[G]\to K_0^+[\Gm]$ decomposes as
$K_0^+[G]\overset{\phi'}{\longrightarrow}K_0^+[H] \overset{\pi^*}{\longrightarrow}K_0^+[\Gm]$, and the homomorphism $\phi'$ is surjective.
By our assumption, $\phi'$ is also injective, hence it is an isomorphism.  Since $G$ is connected, we conclude that $H$ is connected as well (use, for example,
\cite[Cor 2.22]{DM}). Therefore by Theorem~\ref{kgroups} there exists an isomorphism $\rho':H\isom G$ such that $\phi'=\rho'^*$.
Then the composition $\rho:=\rho'\circ\pi:\Gm\to G(K)$ satisfies $\rho^*=\phi'\circ\pi^*=\phi$.

To show the existence of $\rho$ in general, we consider the homomorphism of Grothendieck semirings $\wt{\phi}:K_0^+[G]\to K_0^+[\Gm\times Z(K)]$, considered in
Lemma~\ref{center}.

Then $\wt{\phi}$ is injective, so by the particular case  shown above, there exists a homomorphism
$\wt{\rho}:\Gm\times Z(K)\to G(K)$  such that $\wt{\rho}^*=\wt{\phi}$. Then the composition
$\rho:=\wt{\rho}\circ\iota:\Gm\to G(K)$ satisfies
$\rho^*=\iota^*\circ\wt{\phi}=\phi$.

Conversely, let $\rho:\Gm\to G(K)$ be a homomorphism such that $\rho^*=\phi$. To show that the Zariski closure of $\rho(\Gm)$ contains $G^{\der}$, it
suffice to show that the homomorphism $\wt{\rho}:\Gm\times Z(K)\to G(K)$ from Lemma~\ref{center} (c) has a Zariski closed image.

Let $H\subset G$ be the
Zariski closure of the image of $\wt{\rho}$, and denote by $i$ the inclusion $H\hookrightarrow G$. Then
$\wt{\rho}^*=\wt{\phi}:K_0^+[G]\to K_0^+[\Gm\times Z(K)]$ factors through $i^*: K_0^+[G]\to K_0^+[H]$.
In particular, $i^*$ is injective, and maps irreducibles to irreducibles. Then using Chevalley's theorem (\cite[Th.~5.1]{Bo}) or (\cite[Prop. 2.21]{DM}),
$i$ has to be an isomorphism.

Finally, to show that $\rho$ is unique up to  conjugation, it suffice to show that $\wt{\rho}:\Gm\times Z(K)\to G(K)$
is unique up to conjugation.  Thus we can replace $\rho$ by $\wt{\rho}$, and $\phi$ by $\wt{\phi}$,  thereby assuming that $\phi$ is injective.

Then, using  the notation of the existence part, the tensor functor $\rho^*:\Rep G\to\Rep\Gm$ decomposes as a composition
$\Rep G\overset{\psi}{\longrightarrow}\Rep H \overset{\pi^*}{\longrightarrow}\Rep \Gm$ of tensor functors. By the Tannakian formalism,
there exists a homomorphism $\rho':H\to G$ such that $\rho'^*=\psi$. Then $\rho$ is conjugate to the composition $\rho'\circ \pi$,
so it remains to show that the conjugacy class of $\rho'$ is uniquely defined.

We have seen that $\phi$ decomposes as  $K_0^+[G]\overset{\phi'}{\longrightarrow}K_0^+[H] \overset{\pi^*}{\longrightarrow}K_0^+[\Gm]$, therefore
$\rho'^*:K_0^+[G]\isom K_0^+[H]$ coincides with $\phi'$. Hence the uniqueness assertion for $\rho'$  follows from Theorem~\ref{kgroups}.
\end{proof}
\end{Emp}

\section{Two Corollaries}
In this section we are going to prove Corollaries ~\ref{cmain} and \ref{galrep}.

\begin{lemma}
\label{equiv}
 Assume that  the hypotheses of Corollary~\ref{cmain} hold.

(a)   There exists a unique
homomorphism of semirings $\phi:K_0^+[G]\to K_0^+[\Gm]$ such that $\phi([\om])=[\rho_{\om}]$ for each irreducible $\om$.

(b) Let $\rho:\Gm\to G(K)$ be a group homomorphism. Then equality (\ref{lift}) holds for $\rho$ if and only if
$\om\circ\rho\cong\rho_{\om}$ for all irreducible $\om$.
\end{lemma}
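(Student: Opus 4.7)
The plan is to handle (a) by using the free additive structure of $K_0^+[G]$ on irreducible classes and verifying multiplicativity through character comparison, and to handle (b) by routing both conditions through the trace identity of \ref{cg}(b).

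For (a), since $K_0^+[G]$ is freely generated as an additive semigroup by the irreducible classes $[\om]$, the prescription $[\om]\mapsto[\rho_\om]$ extends uniquely to an additive homomorphism $\phi\colon K_0^+[G]\to K_0^+[\Gm]$; the substantive point is multiplicativity. By additivity this reduces to checking $\phi([\om']\cdot[\om''])=\phi([\om'])\cdot\phi([\om''])$ for irreducible $\om',\om''$. Decomposing $\om'\otimes\om''\cong\bigoplus_i\om_i$, it suffices to verify $[\rho_{\om'}\otimes\rho_{\om''}]=\sum_i[\rho_{\om_i}]$ in $K_0^+[\Gm]$. Both sides are semisimple (the left side by the Chevalley fact on tensor products of irreducibles already invoked in Lemma~\ref{isom}(a)), so I will compare characters on $\Gm$: evaluating (\ref{trace-condition}) at $f(\gm)$ together with $\Tr_{\om'\otimes\om''}(f(\gm))=\Tr_{\om'}(f(\gm))\cdot\Tr_{\om''}(f(\gm))=\sum_i\Tr_{\om_i}(f(\gm))$ yields equality of characters, hence equality of classes in $K_0^+[\Gm]$.

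For (b), my approach is to show that both conditions are equivalent to the trace identity $\Tr_\om(\chi_G(\rho(\gm)))=\Tr_\om(f(\gm))$ holding for every irreducible $\om$ and every $\gm\in\Gm$. Assuming $\chi_G(\rho(\gm))=f(\gm)$, \ref{cg}(b) gives $\Tr_{\om\circ\rho}=\Tr_{\rho_\om}$, so the semisimplification of $\om\circ\rho$ agrees with $\rho_\om$ in $K_0^+[\Gm]$; since $\rho_\om$ is irreducible, $\om\circ\rho$ itself must be irreducible and hence isomorphic to $\rho_\om$. Conversely, from $\om\circ\rho\cong\rho_\om$ for all irreducible $\om$, \ref{cg}(b) together with (\ref{trace-condition}) yields $\Tr_\om(\chi_G(\rho(\gm)))=\Tr_\om(f(\gm))$ for all irreducible $\om$; since these characters span $K[c_G]=K[G]^G$ and $c_G$ is affine, they separate $K$-points, forcing $\chi_G(\rho(\gm))=f(\gm)$.

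The only nonformal ingredient, used in both parts, is that a finite-dimensional semisimple representation of an abstract group over algebraically closed $K$ of characteristic zero is determined up to isomorphism by its character (standard via Jacobson density applied to the image of the group algebra). With this in hand both assertions reduce directly to hypothesis (\ref{trace-condition}) and the identity $\Tr_\om(\chi_G(g))=\Tr_\om(g)$ from \ref{cg}(b); I do not expect any serious obstacle beyond clean bookkeeping of these two ingredients.
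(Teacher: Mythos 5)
Your proof is correct and follows essentially the same route as the paper: in (a) you reduce multiplicativity to irreducibles and compare characters via (\ref{trace-condition}) and the multiplicativity of $\Tr$ on $K_0^+[G]$, and in (b) you route both conditions through the trace identity and Steinberg's theorem that the irreducible characters span $K[c_G]$. The only superfluous step is your invocation of the Chevalley fact on tensor products of irreducibles: since elements of $K_0^+[\Gm]$ are by definition classes of semisimple representations (with the product already built on semisimplification), character comparison applies without it.
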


\begin{proof}
(a)  Since the semigroup $K_0^+[G]$ is freely generated by irreducible elements,
there exists a unique homomorphism of semigroups $\phi:K_0^+[G]\to K_0^+[\Gm]$ such that $\phi([\om])=[\rho_{\om}]$ for each irreducible $\om$.

It remains to show that  for every two representations $\om',\om''$ of $G$ we have an equality
$$\phi([\om']\cdot[\om''])=\phi([\om'])\cdot\phi([\om'']).$$
Since a semisimple representation is determined by its trace,
it is enough to show that
\begin{equation}
\label{tr2}
\Tr_{\phi([\om']\cdot[\om''])}(\gm) = \Tr_{\phi([\om'])}(\gm)\cdot \Tr_{\phi([\om''])}(\gm)
\end{equation}
for all $\gm\in\Gm$. First we observe that for all $\gm\in\Gm$ and  all $[\om]\in K_0^+[G]$ we have an equality
\begin{equation}
\label{tr}
\Tr_{\phi([\om])}(\gm) = \Tr_{[\om]}(f(\gm)).
\end{equation}
Indeed, by the additivity, it is enough to show (\ref{tr}) for $\om$  irreducible. In this case the assertion follows from equalities  $\phi([\om])=[\rho_{\om}]$ and (\ref{trace-condition}).

Using  (\ref{tr}), our desired equality (\ref{tr2}) can be written in the form
$$\Tr_{[\om']\cdot[\om'']}(f(\gm))=\Tr_{[\om']}(f(\gm))\cdot \Tr_{[\om'']}(f(\gm)).$$
Therefore it follows from the multiplicativity of the trace map
$\Tr:K_0^+[G]\to K[G]$.

(b) Since functions $\Tr_{\om}$ with $\om$ irreducible generate $K[c_G]$ as a $K$-vector space (see \cite[Theorem 6.1(a)]{St}), equality (\ref{lift}) is equivalent to the equality
\begin{equation}
\label{lift'}
\Tr_{\om}(\chi_G(\rho(\gm)))=\Tr_{\om}(f(\gm))
\end{equation}
for all $\gm\in\Gm$ and all irreducible $\om$.
Since the left side of (\ref{lift'}) equals $\Tr_{\om\circ\rho}(\gm)$ (see section \ref{cg} (b)), while the right hand side of (\ref{lift'})
equals $\Tr_{\rho_{\om}}(\gm)$ by (\ref{trace-condition}), equality (\ref{lift'}) is equivalent to the equality $\Tr_{\om\circ\rho}=\Tr_{\rho_{\om}}$
for all irreducible $\om$. But this is equivalent to the desired isomorphism $\om\circ\rho\cong\rho_{\om}$.
\end{proof}

\begin{Emp}
\begin{proof}[Proof of Corollary~\ref{cmain}]
By Lemma~\ref{equiv}  (a), there exists a unique homomorphism of semirings $\phi:K_0^+[G]\to K_0^+[\Gm]$
such that $\phi([\om])=[\rho_{\om}]$ for each irreducible $\om$. Then by Theorem~\ref{main} there exists a
homomorphism $\rho:\Gm\to G(K)$ such that $\rho^*=\phi$. In particular, we have $[\om\circ\rho]$ is equal
to $\phi([\om])=[\rho_{\om}]$ for each irreducible $\om$. Then by Lemma~\ref{equiv}  (b),  the equality (\ref{lift})
holds for $\rho$.

Conversely, let $\rho:\Gm\to G(K)$ be a homomorphism, satisfying (\ref{lift}). Then  by Lemma~\ref{equiv}  (b),
$\rho^*([\om])=[\om\circ\rho]$ is equal to $\phi([\om])=[\rho_{\om}]$ for each irreducible $\om$. Thus
$\rho^*:K_0^+[G]\to K_0^+[\Gm]$ is equal to $\phi$.
Therefore it follows from Theorem~\ref{main} that $\rho$ is unique up to conjugation,
and that the Zariski closure of $\rho(\Gm)$ contains $G^{\der}$.
\end{proof}
\end{Emp}

\begin{Emp}
\begin{proof}[Proof of Corollary~\ref{galrep}]
The argument is almost identical to that of Corollary~\ref{cmain}.

As in Lemma~\ref{equiv} (a), there exists a unique homomorphism of semirings $\phi:K_0^+[\hat G]\to K_0^+[\Gm_F]$ such that $\phi([\om])=[\rho_{\om}]$
for each irreducible $\om$. Indeed, arguing as in Lemma~\ref{equiv} (a) word for word, we reduce ourselves to the  equality (\ref{tr2}).  Moreover, by the Chebotarev density theorem, it is enough to show equality (\ref{tr2}) when $\gm=\Frob_v$ for almost all $v\in\Sigma$.

Then we reduce the problem to showing that
$$\Tr_{\phi([\om])}(\Frob_v)=\Tr_{[\om]}(f(v))$$
for all irreducible $[\om]$ and almost all $v\in\Sigma$. But the latter equality follows from equalities
$\phi([\om])=[\rho_{\om}]$ and (\ref{frob-condition}).

By Theorem~\ref{main}, there now exists a homomorphism $\rho:\Gm_F\to \hat G(\Qlb)$ such that  $\rho^*=\phi$.

We claim that for every representation $\om$ of $G$, the composition $\om\circ\rho$ is a semisimple $\ell$-adic representation.
By additivity, it is enough to show in the case when $\om$ is irreducible. However, in this case,
$$[\om\circ\rho]=\rho^*([\om])=[\rho_{\om}]$$
is irreducible,
hence $\om\circ\rho\cong \rho_{\om}$ is an irreducible $\ell$-adic representation.

Choosing $\om$ to be a faithful representation of $\hat G$, we conclude that $\rho$ is continuous and unramified almost everywhere.

Finally, arguing exactly as in Lemma~\ref{equiv} (b) (and using the isomorphisms $\om\circ\rho\cong \rho_{\om}$)
we conclude that $\rho$ satisfies the equality (\ref{lift1}).

Conversely, let  $\rho:\Gm_F\to \hat G(\Qlb)$ be a $\hat G$-valued $\ell$-adic homomorphism satisfying
(\ref{lift1}). Again arguing exactly as in Lemma~\ref{equiv} (b) and using the Chebotarev density theorem, we conclude that
$\rho^*([\om])=[\om\circ\rho]$ is equal to $\phi([\om])=[\rho_{\om}]$ for each irreducible $\om$. Thus
$\rho^*:K_0^+[\hat G]\to K_0^+[\Gm_F]$ is equal to $\phi$.

Therefore it follows from Theorem~\ref{main} that $\rho$ is unique up to conjugation,
and that the Zariski closure of $\rho(\Gm_F)$ contains $\hat G^{\der}$.
\end{proof}
\end{Emp}

\end{document}